\theoremstyle{plain}
\newtheorem*{thm A}{Theorem~A}
\newtheorem*{thm B}{Theorem~B}
\newtheorem*{thm C}{Theorem~C}
\newtheorem*{thm D}{Theorem~D}
\newtheorem*{coro1}{Corollary~1}
\newtheorem*{coro2}{Corollary~2}
\newtheorem*{main1}{Theorem~1}
\newtheorem*{main2}{Theorem~2}
\newtheorem*{pro A}{Proposition~A}
\newtheorem*{pro B}{Proposition~B}
\newtheorem*{lem A}{Lemma~A}
\newtheorem*{lem B}{Lemma~B}
\newtheorem*{lem C}{Lemma~C}
\newtheorem*{lem D}{Lemma~D}
\newtheorem*{lem 1}{Lemma~1}
\newtheorem*{lem 2}{Lemma~2}
\newtheorem*{lem 3}{Lemma~3}
\newtheorem*{rem 1}{Remark~1}
\newtheorem*{rem 2}{Remark~2}
\newtheorem*{rem 3}{Remark~3}
\newtheorem*{proof*}{\it The proof of Corollary}
\newtheorem{theorem}{Theorem}[section]
\newtheorem{lemma}[theorem]{Lemma}
\newtheorem{remark}[theorem]{Remark}
\newtheorem*{defn A}{Definition~A}
\newtheorem*{defn B}{Definition~B}
\def \QP{{\mathcal Q}^{\bot}}
\def \Q{\mathcal Q}
\def \N{\nabla}
\def \EN{{\eta}_{\nu}}
\def \XN{\xi_{\nu}}
\def \EoK{{\eta}_1({\xi})}
\def \ENK{{\eta}_{\nu}({\xi})}
\def \SN{\sum_{\nu=1}^3}
\def \al{\alpha}
\def \be{\beta}
\def \e{\eta}
\def \E{\eta}
\def \la{\lambda}
\def \eo{\eta_{1}}
\def \eh{\eta_{3}}
\def \etw{\eta_{2}}
\def \eth{\eta_{3}}
\def \x{\xi}
\def \xo{{\xi}_1}
\def \xt{\xi_{2}}
\def \xh{\xi_{3}}
\def \xtw{{\xi}_2}
\def \xth{{\xi}_3}
\def \XN{\xi_{\nu}}
\def \p{\phi}
\def \po{\phi_{1}}
\def \pn{\phi_{\nu}}
\def \PN{\phi_{\nu}}
\def \Ph{\phi}
\def \pth{\phi_{3}}
\def \GBt{G_2({\mathbb C}^{m+2})}
\def \GBo{G_2({\mathbb C}^{m+1})}
\def \gtw{\widehat \nabla ^{(k)}}
\def \la{\lambda}
\begin{document}

\title[Recurrent Ricci tensors]{Real Hypersurfaces in complex two-plane Grassmannians with recurrent Ricci tensor}
\vspace{0.2in}

\author[Y.J. Suh, D.H. Hwang, and C. Woo]{Young Jin Suh, Doo Hyun Hwang, and Changhwa Woo}
\address{\newline
Y.J. Suh, D.H. Hwang, and C. Woo
\newline Department of Mathematics,
\newline Kyungpook National University,
\newline Daegu 702-701, REPUBLIC OF KOREA}
\email{yjsuh@knu.ac.kr}
\email{engus0322@knu.ac.kr}
\email{legalgwch@knu.ac.kr}

\footnotetext[1]{{\it 2010 Mathematics Subject Classification}:
Primary 53C40; Secondary 53C15.}
\footnotetext[2]{{\it Key words and phases}: Hopf hypersurfaces,
complex two-plane Grassmannians, Recurrent Ricci tensor}

\thanks{*The first author was supported by by Grant Proj. No. NRF-2012-R1A2A2A01043023 and the third author by NRF-2013-Fostering Core Leaders of the Future Basic Science Program funded by the Korean Government.}

\begin{abstract}
In this paper, we have introduced a new notion of generalized Tanaka-Webster Reeb recurrent Ricci tensor in complex two-plane Grassmannians $G_2({\mathbb C}^{m+2})$.
Next, we give a non-existence property for real hypersurfaces $M$ in $G_2({\mathbb C}^{m+2})$ with such a condition.
\end{abstract}

\maketitle
\section*{Introduction}
\setcounter{equation}{0}
\renewcommand{\theequation}{0.\arabic{equation}}
\vspace{0.13in}

The complex two-plane Grassmannians $\GBt$ is a kind of Hermitian symmetry spaces of compact irreducible type with rank~$2$. It consists of all complex two-dimensional linear subspaces in ${\mathbb C}^{m+2}$. Remarkably, it is equipped with both a K\"{a}hler structure $J$ and a quaternionic K\"{a}hler structure ${\mathfrak J}$ (not containing $J$) satisfying $JJ_{\nu}=J_{\nu}J$ $(\nu=1,2,3)$, where $\{J_{\nu}\}_{\nu=1,2,3}$ is an orthonormal basis of $\mathfrak J$. When $m=1$, $G_2({\mathbb C}^3)$ is isometric to the two-dimensional
complex projective space ${\mathbb C}P^2$ with constant holomorphic
sectional curvature eight. When $m=2$, we note that the isomorphism $\text{Spin}(6) \simeq \text{SU}(4)$ yields an isometry between $G_2({\mathbb C}^4)$ and the real Grassmann
manifold $G_2^+({\mathbb R}^6)$ of oriented two-dimensional linear
subspaces in ${\mathbb R}^6$. In this paper, we assume $m \geq 3$ (see Berndt and Suh~\cite{BS1} and~\cite{BS2}).
\vskip 3pt
Let $M$ be a real hypersurface of $G_2({\mathbb C}^{m+2})$, that is, a submanifold of $G_2({\mathbb C}^{m+2})$ with real codimension one and $T_pM$ stands for the tangent space of $M$ at $p\in M$. The
induced Riemannian metric on $M$ will also be denoted by $g$, and $\nabla$ denotes the Riemannian connection of $(M,g)$. Let $N$ be a
local unit normal vector field of $M$ and $A$ the shape operator of $M$ with respect to $N$. By the formulas in \cite[Section~$2$]{LCW}, it can be easily seen that $\x$ is Hopf if and only if $M$ is Hopf. From the quaternionic K\"{a}hler structure $\mathfrak J$ of $\GBt$, there naturally exist {\it almost contact 3-structure} vector fields defined by $\x_{\nu}=-J_{\nu}N$, $\nu=1,2,3$. Next, let us denote by $\QP=\text{Span}\{\,\xo, \xt, \xh\}$ a 3-dimensional distribution in a tangent space $T_{p}M$ at $p \in M$, where $\Q$ stands for the orthogonal complement of $\QP$ in $T_{p}M$.
Thus the tangent space of $M$ at $p\in M$ consists of the direct sum of $\Q$ and $\QP$, that is, $T_{p}M =\Q\oplus \QP$. (see \cite{BS1} and \cite{LSW}).
\vskip 5pt
By using the result of Alekseevskii~\cite{Al-01}, Berndt and Suh~\cite{BS1} have classified all real hypersurfaces with these invariant properties in $\GBt$ as follows:
\vskip 5pt
\begin{thm A}\label{thm1}
Let $M$ be a real hypersurface in $\GBt$, $m \geq 3$. Then both $[\x]$ and $\QP$ are invariant under the shape operator of $M$ if and only if
\begin{enumerate}[\rm(A)]
\item {$M$ is an open part of a tube around a totally geodesic $\GBo$ in $\GBt$, or}\
\item {$m$ is even, say $m = 2n$, and $M$ is an open part of a tube around a totally geodesic ${\mathbb H}P^n$ in $\GBt$}.
\end{enumerate}
\end{thm A}

In the case of~$(A)$ (resp.,~$(B)$), we say that $M$ is of Type~$(A)$ (resp., Type~$(B)$).
Furthermore, the real hypersurface $M$ is said to be {\it Hopf} if $A[\x]\subset[\x]$, or equivalently,
the Reeb vector field $\x$ is principal with principal curvature $\al=g(A\x,\x)$. In this case,
the principal curvature $\al=g(A\x,\x)$ is said to be a {\it Reeb curvature} of~$M$.
\vskip 5pt
By using Theorem~$\rm A$, many geometers have given some characterizations for Hopf hypersurfaces in $\GBt$ with geometrical quantities; shape operator, normal (or structure) Jacobi operator, Ricci tensor, and so on.  The Ricci tensor $S$ of $M$ in $\GBt$ is given by $$g(SX,Y)={\sum}_{i=1}^{4m-1}g(R(e_i,X)Y,e_i),$$ where $\{e_1, {\cdots}, e_{4m-1}\}$ denotes a basis of the tangent space $T_pM$ of $M$, $p{\in}M$, in $\GBt$ (see~\cite{S02}).
\vskip 5pt
Now we define the notion of recurrent, which is weaker than the usual parallelism.
The notion of recurrent for a $(1,1)$ type tensor field $T$ has a close relation to holonomy group. For a $1$-form $\omega$ on $M$ is defined by $\N T = T \otimes\omega$, (see \cite{KN}).
\vskip 5pt
Let us consider a notion of recurrent (resp., Reeb recurrent) Ricci tensor $S$ for a real hypersurface $M$ in $\GBt$ defined by
\begin{equation}\label{C-1}
\N_{X}S=\omega(X)S
\tag{C-1}
\end{equation}
for any $X$ in $TM$.
\vskip 5pt
Motivated by such a notion, we want to introduce another new notion of Reeb recurrent Ricci tensor. It is weaker than usual parallel Ricci tensor and is defined by
\begin{equation}\label{C-2}
\N_{\x}S=\omega(\x)S.
\tag{C-2}
\end{equation}
 Now we say that if $S$ satisfies the condition \eqref{C-2}, it is a {\it proper Reeb recurrent} if $\omega(\x)$ is non-vanishing, i.e., $\omega(\x)\neq 0$. Then \eqref{C-1} (resp., \eqref{C-2})
  means $[\N_XS,S]=\omega(X)[S,S]=0$ (resp., $[\N_{\x}S,S]=0 $)
for any tangent vector field $X$ defined on $M$ (see \cite{S96}). Its geometrical meaning is that the eigenspaces of the Ricci operator $S$ of M are
parallel along any curve $\gamma$ (resp., Reeb flow). Here, the eigenspaces are said to be parallel if they are invariant with respect to any parallel translations along $\gamma$ (resp., Reeb flow) (for detailed examples, see \cite{SM}, \cite{SM2}, \cite{KS}). There are many examples of Recurrent Ricci tensor in pseudo-Riemannian manifolds \cite[Example 4, p. 13]{SM2}.
\vskip 5pt
In this paper, we give a complete classification of real hypersurfaces $M$ in $\GBt$ with recurrent (resp., Reeb recurrent) Ricci tensor as follows:
\vskip 5pt
\begin{main1}\label{main1}
There do not exist any Hopf hypersurfaces in $\GBt$, $m \geq 3$, with proper Reeb recurrent Ricci tensor if the Reeb curvature is non-vanishing.
\end{main1}
\vskip 5pt
\begin{rem 1}
When $\omega(\x)=0$, the Reeb recurrent Ricci tensor is equivalent to Reeb parallel Ricci tensor, so by using the result of \cite{S01},
$M$ is locally congruent to one of the following:
\begin{enumerate}[\rm(i)]
\item {a tube over a totally geodesic $G_2({\mathbb C}^{m+1})$ in $G_2({\mathbb C}^{m+2})$ with radius $r \neq \frac{\pi}{4 \sqrt{2}}$, or} \
\item {a tube over a totally geodesic quaternionic projective
space ${\mathbb H}P^n$, $m = 2n$, in $G_2({\mathbb C}^{m+2})$ with radius $r$ such that $\cot^{2}(2r) = \frac{1}{2m-1}$ and $\x$-parallel eigenspaces $T_{\cot r}$ and $T_{\tan r}$}.
\end{enumerate}
\end{rem 1}
\vskip 5pt
On the other hand, if we use the result in \cite{S}, we can assert another non-existence property for real hypersurfaces in $\GBt$ with Recurrent Ricci tensor as follows:
\vskip 5pt
\begin{coro1}\label{coro1}
There do not exist any Hopf hypersurfaces in $\GBt$, $m \geq 3$, with recurrent Ricci tensor.
\end{coro1}
\vskip 5pt

Next, we consider a new connection which is different from the usual Levi-Civita connection, so called, the generalized Tanaka-Webster (in short, GTW) connection. Even though this connection
does not satisfies torsion free condition, it is deeply related to the contact structure (see \cite{CH1}, \cite{CH2}).\\
\par Let us consider a notion of the GTW recurrent Ricci tensor $S$ for a real hypersurface $M$ in $\GBt$ defined by
\begin{equation}\label{C-3}
\gtw_{X}S=\omega(X)S
\tag{C-3}
\end{equation}
for any $X$ in $TM$, where $\omega$ denotes a $1$-form defined on $M$.

Similarly, we may also define GTW Reeb parallel Ricci tensor as follows

\begin{equation}\label{C-4}
\gtw_{\x}S=\omega(\x)S.
\tag{C-4}
\end{equation}
We say that the condition \eqref{C-4} is said to be a {\it proper GTW Reeb recurrent} if the $1$-form $\omega(\x)$ is non-vanishing, i.e., $\omega(\x)\neq 0$.
We can classify real hypersurfaces $M$ in $\GBt$ with GTW Reeb recurrent Ricci tensor as follows:
\vskip 5pt
\begin{main2}\label{main2}
There do not exist any Hopf hypersurfaces in $\GBt$, $m \geq 3$, $(\al\neq 2k)$ with proper GTW Reeb recurrent Ricci tensor.
\end{main2}
\vskip 5pt

\vskip 5pt
\begin{rem 2}
When $\omega(\x)$ identically vanishes, that is, $\omega(\x)=0$, then the GTW Reeb recurrent Ricci tensor is equivalent to GTW Reeb parallel Ricci tensor; therefore, by using the result of \cite{LSW},
$M$ is locally congruent to one of the following:
\begin{enumerate}[\rm(i)]
\item {a tube over a totally geodesic $G_2({\mathbb C}^{m+1})$ in $G_2({\mathbb C}^{m+2})$ with radius $r$ such that  $r \neq \frac{1}{2\sqrt{2}}\cot^{-1}(\frac{k}{\sqrt{2}})$, or}
\item {a tube over a totally geodesic ${\mathbb H}P^n$, $m = 2n$, in $G_2({\mathbb C}^{m+2})$ with radius $r$ such that $r=\frac{1}{2}\cot^{-1}(\frac{-k}{4(2n-1)})$.}
\end{enumerate}
\end{rem 2}
\vskip 5pt
Using the result in \cite{PS2}, we can assert another non-existence property for real hypersurfaces $M$ in $\GBt$ as follows:
\vskip 5pt
\begin{coro2}\label{coro2}
There do not exist any Hopf hypersurfaces in $\GBt$, $m \geq 3$, $(\al\neq 2k)$ with GTW recurrent Ricci tensor.
\end{coro2}

In Sections~\ref{section 1}, \ref{section 2} complete proofs of Theorem~$1$ and Corollary~$1$ will be given respectively.
In Sections~\ref{section 3} and \ref{section 4}, the proofs of Theorem~$2$ and Corollary~$2$ will be given.
Main references for Riemannian geometric structures of $\GBt$, $m \geq 3$ will be explained in detail (see \cite{Al-01}, \cite{BS1}, \cite{BS2}, and \cite{LS}).
\vskip 5pt
\section{The proper Reeb recurrent Ricci tensor}\label{section 1}
\setcounter{equation}{0}
\renewcommand{\theequation}{1.\arabic{equation}}
\vspace{0.13in}

From now on, let $M$ represent a real hypersurface in $\GBt$, $m \geq 3$, and $S$ denote the Ricci tensor of $M$. Hereafter, unless otherwise stated, we consider that $X$ and $Y$ are any tangent vector fields on $M$ and $N$ denotes the normal vector field of $M$. $\omega$ stands for any $1$-form on $M$. For the K\"{a}hler structure $J$ and the quaternionic K\"{a}hler structure ${\mathfrak J}=\text{span}\{J_{\nu}\}_{\nu=1,2,3}$, we may put
\begin{equation*}
JX={\phi}X+{\eta}(X)N,\quad J_{\nu}X={\phi}_{\nu}X+{\eta}_{\nu}(X)N
\end{equation*}
where $\p X$ (resp., $\PN X$) is the tangential part of $JX$ (resp., $J_{\nu}X$) and $\e(X)=g(X,\x)$ (resp., $\EN(X)=g(X,\XN)$) is the coefficient of normal part of $JX$ (resp., $J_{\nu}X$). In this case, we call $\p$ the structure tensor field of $M$.
In \cite{PS}, the Ricci tensor $S$ of a real hypersurface $M$ in $\GBt$, $m \geq 3$, is given by
\begin{equation}\label{eq: 1.1}
\begin{split}
SX & = \sum_{i=1}^{4m-1} R(X,e_i)e_i \\
   & = (4m+7)X - 3\eta(X)\x + hAX - A^2 X \\
   & \quad + \sum_{\nu =1}^3 \Big\{ - 3 \eta_{\nu}(X)\x_{\nu} + \eta_{\nu}(\x)\p_{\nu}\p X-\eta_{\nu}(\p X)\p_{\nu}\x -\eta(X)\eta_{\nu}(\x)\x_{\nu} \Big\},
\end{split}
\end{equation}
where $h$ denotes the trace of the shape operator $A$, that is, $h=\text{Tr }A$.\\
In \cite{LSW}, the covariant derivative of $S$ is given by
\begin{equation}\label{eq: 1.2}
\begin{split}
(\N_{X}S)Y & = - 3g(\p AX, Y) \x - 3\eta(Y) \p AX \\
                 & \quad - 3\sum_{\nu=1}^{3} \big\{ g(\p_{\nu}AX, Y) \x_{\nu} + \eta_{\nu} (Y) \p_{\nu}AX \big\}\\
                 &\quad + \sum_{\nu=1}^{3}\Big\{2g(\p AX, \x_{\nu})\p_{\nu}\p Y + g(AX, \p_{\nu}\p Y)\p_{\nu}\x \\
                 &\quad \ \ - \eta(Y) g(AX, \x_{\nu})\p_{\nu}\x + \eta_{\nu}(\p Y) g(AX, \x) \x_{\nu}-\eta_{\nu}(\p Y) \p_{\nu} \p AX \\
                 &\quad \ \ - \eta(Y) g(\p AX, \x_{\nu})\x_{\nu}-\eta(Y) g(\p_{\nu}AX, \x)\x_{\nu}\Big\}\\
                 &\quad +(Xh)AY +h(\N_{X}A)Y - (\N_{X}A)AY -A(\N_{X}A)Y.
\end{split}
\end{equation}

Thus, $(\N_{X}S)Y=\omega(X)SY$ is embodied as follows:

\begin{equation}\label{eq: 1.3}
\begin{split}
                 &- 3g(\p AX, Y) \x - 3\eta(Y) \p AX \\
                 &- 3\sum_{\nu=1}^{3} \big\{ g(\p_{\nu}AX, Y) \x_{\nu} + \eta_{\nu} (Y) \p_{\nu}AX \big\}\\
                 & + \sum_{\nu=1}^{3}\Big\{2g(\p AX, \x_{\nu})\p_{\nu}\p Y + g(AX, \p_{\nu}\p Y)\p_{\nu}\x \\
                 &\quad- \eta(Y) g(AX, \x_{\nu})\p_{\nu}\x + \eta_{\nu}(\p Y) g(AX, \x) \x_{\nu}-\eta_{\nu}(\p Y) \p_{\nu} \p AX \\
                 &\quad- \eta(Y) g(\p AX, \x_{\nu})\x_{\nu}-\eta(Y) g(\p_{\nu}AX, \x)\x_{\nu}\Big\}\\
\end{split}
\end{equation}
\begin{equation*}
\begin{split}
                 &\quad +(Xh)AY +h(\N_{X}A)Y - (\N_{X}A)AY -A(\N_{X}A)Y\\
                 &=\omega(X)\Big[ (4m+7)Y - 3\eta(Y)\x + hAY - A^2 Y \\
                 & \quad + \sum_{\nu =1}^3 \big\{ - 3 \eta_{\nu}(Y)\x_{\nu} + \eta_{\nu}(\x)\p_{\nu}\p Y-\eta_{\nu}(\p Y)\p_{\nu}\x -\eta(Y)\eta_{\nu}(\x)\x_{\nu} \big\}\Big].
\end{split}
\end{equation*}

As a special case, we may consider Reeb directional derivative of the Ricci tensor. If the Ricci tensor of a real hypersurface $M$ in $\GBt$ is Reeb recurrent, then it is defined by
\begin{equation}\label{C-2}
(\N_{\x}S)Y=\omega({\x})SY.
\tag{C-2}
\end{equation}
Under the condition of being Hopf, \eqref{C-2} is specified:
\begin{equation}\label{eq: 1.4}
\begin{split}
                 &\quad- 3\al\sum_{\nu=1}^{3} \big\{  g(\p_{\nu}\xi, Y) \x_{\nu} +\eta_{\nu} (Y) \p_{\nu}\xi \big\}\\
                 &\quad+\al\sum_{\nu=1}^{3}\Big\{g(\xi, \p_{\nu}\p Y)\p_{\nu}\x-\eta(Y)\eta_{\nu}(\xi)\p_{\nu}\x +\eta_{\nu}(\p Y)\x_{\nu}\Big\}\\
                 &\quad+(Xh)AY +h(\N_{\xi}A)Y - (\N_{\xi}A)AY -A(\N_{\xi}A)Y\\
                 &=\omega(\x)\Big[ (4m+7)Y - 3\eta(Y)\x + hAY - A^2 Y \\
                 & \quad + \sum_{\nu =1}^3 \big\{ - 3 \eta_{\nu}(Y)\x_{\nu} + \eta_{\nu}(\x)\p_{\nu}\p Y-\eta(\p_{\nu}Y)\p_{\nu}\x -\eta(Y)\eta_{\nu}(\x)\x_{\nu} \big\}\Big].
\end{split}
\end{equation}

\par First of all, by using above assumption, we shall
show that the Reeb vector field $\x$ belongs to either the
distribution $\Q$ or the distribution $\Q^{\bot}$
such that $T_{x}M=\Q \oplus \QP$ for any
point $x \in M$.\\

\begin{lemma}\label{lemma 1.1}
Let $M$ be a Hopf hypersurface in $\GBt$, $m \geq 3$. If $M$ has Reeb recurrent Ricci tensor, then the Reeb vector field $\x$ belongs to either the distribution $\Q$ or the distribution $\QP$.
\end{lemma}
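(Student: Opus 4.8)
The plan is to extract the conclusion from a single substitution into the Reeb recurrent identity \eqref{eq: 1.4}. First I would normalise the quaternionic frame at a point $p$: rotating $\{J_\nu\}$ I may assume the $\QP$-component of $\x$ points along $\xo$, so that $\x=\e(X_0)X_0+\eo(\x)\xo$ with $X_0\in\Q$ a unit vector and $\eo(\x)=\e(\xo)$, while $\e(\xi_2)=\e(\xi_3)=0$. The whole assertion then reduces to the single scalar identity $\e(X_0)\,\eo(\x)=0$ at $p$. Before computing I would record the Hopf identities $A\x=\al\x$, $\p\x=0$, $\pn\x=\p\XN$, $\N_{\x}\x=\p A\x=0$ and hence $(\N_{\x}A)\x=(\x\al)\x$. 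I would also note that the ``mixed'' vector $\p\xo=\po\x=\e(X_0)\po X_0$ lies in $\Q$ (since $\po$ preserves $\Q$), is orthogonal to both $\x$ and $\xo$, and has squared length $1-\eo(\x)^2=\e(X_0)^2$.

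The main step is to set $Y=\x$ in \eqref{eq: 1.4}. Because $\p\x=0$, all terms containing $\pn\p Y$ or $\EN(\p Y)$ drop out, the two curvature-type sums collapse to $-4\al\,\eo(\x)\,\p\xo$, and the four derivative terms $(\x h)A\x+h(\N_{\x}A)\x-(\N_{\x}A)A\x-A(\N_{\x}A)\x$ are each proportional to $\x$. On the right-hand side, $S\x$ reduces to a multiple of $\x$ plus a multiple of $\xo$, with no component along $\p\xo$. Every term of the resulting identity thus lies in the span of the three mutually orthogonal vectors $\x$, $\xo$, $\p\xo$, and projecting onto $\p\xo$ — which annihilates the $\x$- and $\xo$-parts on both sides — leaves the clean relation
\[
\al\,\eo(\x)\bigl(1-\eo(\x)^2\bigr)=0 .
\]
Since $1-\eo(\x)^2=\e(X_0)^2$, this is exactly $\al\,\e(X_0)\,\eo(\x)=0$, so on the open set where the Reeb curvature $\al$ is non-zero the dichotomy $\x\in\Q$ or $\x\in\QP$ holds.

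The delicate point, which I expect to be the main obstacle, is the locus where $\al=0$: there the factor $\al$ has already killed both curvature sums in \eqref{eq: 1.4}, so the projection above is vacuous and one can no longer avoid computing $\N_{\x}A$. My plan for this sublocus is to feed further test vectors into the same recurrent identity: comparing the $\xo$-component and the $\Q$-component of \eqref{eq: 1.4} at $Y=\x$ yields two scalar relations whose difference forces $\omega(\x)=0$ whenever both parts of $\x$ are present, which in the proper case is already a contradiction; in general I would then invoke the Codazzi equation for Hopf hypersurfaces in $\GBt$ to express $(\N_{\x}A)Y$ and rule out $\e(X_0)\eo(\x)\neq0$ on the $\al=0$ locus as well. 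Finally, the two alternatives cut out the closed, disjoint sets $\{\x\in\Q\}$ and $\{\x\in\QP\}$ whose union is the connected manifold $M$ (their intersection would force $\x=0$), so by connectedness one of them is all of $M$, which gives the global statement of the lemma.
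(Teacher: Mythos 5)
Your proposal is correct and matches the paper's own proof almost step for step: the paper likewise writes $\xi=\eta(X_0)X_0+\eta_1(\xi)\xi_1$, substitutes $Y=\xi$ into \eqref{eq: 1.4}, and takes the inner product with $\phi_1\xi$ (the same vector as your $\phi\xi_1$), obtaining exactly your key scalar relation in the form $-4\alpha\,\eta_1(\xi)\,\eta^{2}(X_0)=0$ before splitting into the three cases $\alpha=0$, $\eta_1(\xi)=0$, $\eta(X_0)=0$. Your Codazzi-based fallback on the locus $\alpha=0$ is precisely how the paper closes that case, namely via the Hopf identity $Y\alpha=(\xi\alpha)\eta(Y)-4\sum_{\nu=1}^{3}\eta_{\nu}(\xi)\eta_{\nu}(\phi Y)$ of \cite[Lemma~1]{BS1} (applied with $Y=\phi_1\xi$, as in \cite{PS}), so your sketch is completable exactly along the paper's lines.
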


\begin{proof}
\noindent To show this fact, we consider that the Reeb vector field $\xi$ satisfies
\begin{equation}\label{xi}
\xi = \eta(X_{0})X_{0}+\eta(\xi_{1})\xi_{1}
\tag{*}
\end{equation}
 for some unit vectors $X_{0} \in
\Q$ and $\xi_{1} \in \QP$. Putting $Y=\x$ in \eqref{eq: 1.4}, by \eqref{xi} and using basic formulas in \cite[Section 2]{LCW}, it follows that
\begin{equation}\label{eq: 1.5}
\begin{split}
&-4\al \EoK\po\x +\al(\x h)\x + h(\x \al)\x + 2\al (\x \al)\x\\&
\quad\quad=\omega(\x)\big\{(4m+4+h\al+\al^2)\x-4\EoK\xo\big\},
\end{split}
\end{equation}
where we have used $(\N_{\x}A)\x = (\x \al) \x$ and $(\N_{\x}A)A\x=\al (\x \al) \x$.

\vskip 5pt

\noindent Taking the inner product of \eqref{eq: 1.5} with $\po\x$, we have
\begin{equation}\label{eq: 1.6}
-4\al\EoK\eta^{2}(X_0)=0.
\end{equation}
From this, we have the following three cases.

\vskip 3pt
\noindent {\bf Case 1\,:} \quad $\al = 0$.
\newline By the equation $Y{\alpha}=({\x}{\alpha}){\eta}(Y)-4{\SN}{\ENK}{\EN}({\p}Y)$
in \cite[Lemma~$1$]{BS1}, we obtain easily that $\x$ belongs to either $\Q$ or $\QP$ (see \cite{PS}).

\vskip 3pt
\noindent {\bf Case 2\,:} \quad $\eta(\x_{1})=0$.
\vskip 3pt

By the notation \eqref{xi} related to the Reeb vector field, we see that $\x$ belongs to the distribution $\Q$.

\vskip 3pt
\noindent {\bf Case 3\,:} \quad $\E(X_0)=0$.
\vskip 3pt

This case implies that $\x$ belongs to the distribution $\QP$.
\vskip 5pt
\noindent Accordingly, summing up these cases, the proof is completed.
\end{proof}

\begin{lemma}\label{lemma 1.2}
Let $M$ be a Hopf hypersurface in $\GBt$. If the Reeb vector field $\x$ belongs to $\QP$, then
the Ricci tensor $S$ and the shape operator $A$ commutes with each other, that is, $SA=AS$.
\end{lemma}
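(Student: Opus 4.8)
The plan is to start from the explicit formula \eqref{eq: 1.1} for $S$ and strip off the part that is a polynomial in $A$. Since $\x\in\QP$, I first choose the orthonormal basis $\{J_1,J_2,J_3\}$ of $\mathfrak J$ so that $\x=\xo$; then $\e=\eo$, $\eo(\x)=1$, $\et(\x)=\eh(\x)=0$, and the almost contact $3$-structure relations give $\po\x=0$, $\ptw\x=-\xh$, $\pth\x=\xt$, together with the dual identities $\et(\p X)=\eh(X)$ and $\eh(\p X)=-\et(X)$. One also checks $\po\p=\p\po$, so that $\po\p=J_1J$ restricts to an involution on $\Q$ (because $(J_1J)^2=\mathrm{id}$), with $\pm1$-eigenspaces $\Q^{+}$ and $\Q^{-}$.

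Substituting these relations into \eqref{eq: 1.1} and writing $S=(4m+7)I+hA-A^2+T$, all the Reeb and quaternionic terms collapse to
\begin{equation*}
TX=-7\eo(X)\xo-2\et(X)\xt-2\eh(X)\xh+\po\p X .
\end{equation*}
Since $(4m+7)I$ and $hA-A^2$ are scalars and polynomials in $A$, they commute with $A$; hence $SA-AS=[T,A]$, and it suffices to prove $[T,A]=0$.

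Both $T$ and $A$ are symmetric, so $[T,A]=0$ is equivalent to $A$ preserving every eigenspace of $T$. Reading off the action of $T$ and using $\po\p\xt=\xt$, $\po\p\xh=\xh$, and $\po\p=\pm I$ on $\Q^{\pm}$, I find that $T$ acts as $-7\,\mathrm{id}$ on $\mathbb R\xo$, as $-\mathrm{id}$ on $\mathrm{span}\{\xt,\xh\}\oplus\Q^{-}$, and as $+\mathrm{id}$ on $\Q^{+}$. The Hopf condition $A\xo=\al\xo$ handles $\mathbb R\xo$. Moreover, granting $A\Q\subseteq\Q$, for $X\in\Q$ one has $g(A\xt,X)=g(\xt,AX)=0$ and $g(A\xt,\xo)=\al\,g(\xt,\xo)=0$, so $A$ preserves $\mathrm{span}\{\xt,\xh\}$. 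Thus the whole proof reduces to the two assertions $A\Q\subseteq\Q$ and $[A,\po\p]=0$ on $\Q$, i.e. $A$ preserves $\Q^{+}$ and $\Q^{-}$.

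The main obstacle is precisely this last step: showing that the shape operator commutes with the quaternionic product structure $\po\p=J_1J$ on $\Q$. I would obtain it from the Codazzi equation for $M$: substituting $A\x=\al\x$ into Codazzi and contracting against vectors in $\Q$ yields that $A$ leaves $\Q$ invariant and mixes nothing between the two $J_1J$-eigenspaces. Equivalently, one may invoke the local description of Hopf hypersurfaces with $\x\in\QP$ as tubes of Type~$(B)$, for which the principal curvature spaces inside $\Q$ are exactly $\{X\in\Q:\p X=\po X\}$ and $\{X\in\Q:\p X=-\po X\}$; on these spaces $J_1J$ acts as $-I$ and $+I$ respectively, so $A$ and $\po\p$ are simultaneously diagonal. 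Either route gives $A$-invariance of $\Q^{\pm}$, whence $[T,A]=0$ and therefore $SA=AS$.
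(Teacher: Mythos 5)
Your reduction is correct as far as it goes: with $\x=\xo$, formula \eqref{eq: 1.1} does collapse to $S=(4m+7)I+hA-A^{2}+T$ with $TX=-7\e(X)\xo-2\et(X)\xt-2\eh(X)\xh+\po\p X$, your eigenvalue computation for $T$ ($-7$ on $\mathbb{R}\xo$, $-1$ on $\mathrm{span}\{\xt,\xh\}\oplus\Q^{-}$, $+1$ on $\Q^{+}$) is right, and $SA=AS$ is indeed equivalent to $A$ preserving these eigenspaces. (For comparison: the paper itself gives no argument here at all, only the citation to \cite[Lemma 1.2]{PaSW}, so your attempt stands or falls as a free-standing proof.) The genuine gap is exactly at the point you yourself flag as ``the main obstacle'': neither of your two routes establishes the $A$-invariance. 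The Codazzi equation, in the form it enters this circle of papers (equation \eqref{eq: 1.14}), yields for $\x=\xo$ only the relation $\al(A\p+\p A)X-2A\p AX+2\p X=-2\po X+4\et(X)\xth-4\eth(X)\xtw$; restricted to $\Q^{-}$ and $\Q^{+}$ this gives $\al(A\p+\p A)X-2A\p AX=-4\p X$ and $\al(A\p+\p A)X=2A\p AX$ respectively, identities satisfied by many operators, which force neither $A\Q\subseteq\Q$ nor $[A,\po\p]=0$. Your fallback is worse: tubes of Type~$(B)$ have $\x\in\Q$, not $\x\in\QP$ (the model with $\x\in\QP$ is Type~$(A)$, and the eigenspace description you quote, $\p X=\pm\po X$ on $\Q$, is precisely Berndt--Suh's Proposition~3 for Type~$(A)$); and in any case invoking a classification would be circular, since no classification of Hopf hypersurfaces with $\x\in\QP$ is available at this stage --- Lemma~\ref{lemma 1.2} is an ingredient of the argument that only later pins down Type~$(A)$.

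What actually closes the gap is not Codazzi but differentiation of the hypothesis $\x\in\QP$ itself. Writing $\x=\xo$ on an open set and comparing the two standard derivative formulas
\begin{equation*}
\N_{X}\x=\p AX,\qquad \N_{X}\xo=q_{3}(X)\xt-q_{2}(X)\xth+\po AX,
\end{equation*}
you obtain $(\p-\po)AX=q_{3}(X)\xt-q_{2}(X)\xth$ for every $X$. Pairing with $Y\in\Q$ and using the skew-symmetry of $\p$ and $\po$ gives $g(AX,(\p-\po)Y)=0$ for all $X$, hence $A(\p-\po)Y=0$ on $\Q$; since $(\p-\po)=2\p$ on $\Q^{+}$ and $\p\Q^{+}=\Q^{+}$, this says $A$ vanishes identically on $\Q^{+}$ (the Type~$(A)$ phenomenon $\mu=0$). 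Then $\mathbb{R}\x\oplus\Q^{+}$ is $A$-invariant, so its orthogonal complement $\mathrm{span}\{\xt,\xh\}\oplus\Q^{-}$ --- the full $(-1)$-eigenspace of $T$ --- is $A$-invariant as well, and $[T,A]=0$ follows. Note that this bypasses your intermediate claims $A\Q\subseteq\Q$ and the \emph{separate} invariance of $\Q^{-}$ and $\mathrm{span}\{\xt,\xh\}$: those are finer statements than needed, and the computation above does not deliver them (a priori $A\xt$ may have a $\Q^{-}$-component, since $g(A\xt,X)=\tfrac12 q_{2}(X)$); only invariance of the total $(-1)$-eigenspace is provable, and it suffices.
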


(see \cite[Lemma 1.2]{PaSW}.)
\vskip 3pt

\begin{lemma}\label{lemma 1.3}
\vskip 3pt
Let $M$ be a Hopf hypersurface in $\GBt$, $m \geq 3$, with non-vanishing Reeb curvature (i.e., $\al\neq0$).
When the Reeb vector field $\x$ belongs to $\QP$, if $M$ has Reeb recurrent Ricci tensor, that is, $(\N_{\x}S)X=\omega(\x)SX$, then $M$ must have commuting Ricci tensor $S\p=\p S$.
\end{lemma}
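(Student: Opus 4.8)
The plan is to reduce the identity $S\phi=\phi S$ to a purely shape-operator statement and then extract that statement from the recurrence hypothesis. Since $\xi\in\QP$, I would first normalize the quaternionic basis so that $\xi=\xi_1$; then $\eta=\eta_1$, $\eta_2(\xi)=\eta_3(\xi)=0$, $\phi_1\xi=0$, and from $JJ_\nu=J_\nu J$ one gets $\phi_1\phi=\phi\phi_1$ together with $\eta_2(\phi X)=\eta_3(X)$ and $\eta_3(\phi X)=-\eta_2(X)$. Substituting these into \eqref{eq: 1.1} collapses the Ricci operator to $SX=(4m+7)X+hAX-A^2X+\phi_1\phi X-7\eta_1(X)\xi_1-2\eta_2(X)\xi_2-2\eta_3(X)\xi_3$. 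Computing $S\phi-\phi S$ from this expression, I expect every quaternionic correction term (the $\phi_1\phi$ term and the three $\xi_\nu$ terms) to cancel, leaving the clean relation $S\phi-\phi S=[\phi,A^2]-h[\phi,A]$. Thus it suffices to prove $\phi A^2-A^2\phi=h(\phi A-A\phi)$.

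Next I would run the hypothesis $(\nabla_\xi S)Y=\omega(\xi)SY$ through \eqref{eq: 1.4}. With $\xi=\xi_1$ and $M$ Hopf, a direct check (using $\phi_2\xi_1=-\xi_3$, $\phi_3\xi_1=\xi_2$) shows that the two quaternionic brackets on the left of \eqref{eq: 1.4} vanish identically, so the recurrence reduces to the operator identity $(\xi h)A+h(\nabla_\xi A)-(\nabla_\xi A)A-A(\nabla_\xi A)=\omega(\xi)S$. I would then compute $\nabla_\xi A$ from the Codazzi equation with $Y=\xi$, which gives $(\nabla_\xi A)X=(\xi\alpha)\eta(X)\xi_1+\alpha\phi AX-A\phi AX+\phi X+\phi_1 X-2\eta_2(X)\xi_3+2\eta_3(X)\xi_2$, and eliminate the quadratic term $A\phi A$ by means of the Hopf fundamental equation $2A\phi A=\alpha(\phi A+A\phi)+2\phi+2\phi_1+(\text{rank-one }\xi_\nu\text{ terms})$ in the same normalization. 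After the resulting cancellations I expect the strikingly simple formula $\nabla_\xi A=\tfrac{\alpha}{2}[\phi,A]+(\xi\alpha)\,\eta\otimes\xi_1$.

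Feeding this back into the reduced recurrence and using $[\phi,A]A+A[\phi,A]=[\phi,A^2]$, the shape-operator part of $h(\nabla_\xi A)-(\nabla_\xi A)A-A(\nabla_\xi A)$ becomes exactly $-\tfrac{\alpha}{2}(S\phi-\phi S)$, so the whole identity rearranges to $\tfrac{\alpha}{2}(S\phi-\phi S)=(\xi\alpha)(h-2\alpha)\,\eta\otimes\xi_1+(\xi h)A-\omega(\xi)S$. The finishing move is an eigenbasis argument. By Lemma~\ref{lemma 1.2} the operators $S$ and $A$ commute, hence admit a common orthonormal eigenbasis $\{e_i\}$ (with $Se_i=\mu_i e_i$), which may be chosen to contain $\xi$; in this basis the entire right-hand side is diagonal, since $\eta\otimes\xi_1$ is the rank-one projection onto the joint eigenvector $\xi$. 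On the other hand $g((S\phi-\phi S)e_i,e_i)=\mu_i g(\phi e_i,e_i)-\mu_i g(\phi e_i,e_i)=0$, so the diagonal of $S\phi-\phi S$ is identically zero, while matching its off-diagonal entries against the diagonal right-hand side forces them to vanish as well. Therefore $\tfrac{\alpha}{2}(S\phi-\phi S)=0$, and since $\alpha\neq0$ we conclude $S\phi=\phi S$.

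The main obstacle is the middle step, the explicit determination of $\nabla_\xi A$: it requires correctly specializing both the Codazzi equation and the Hopf fundamental equation to $\xi=\xi_1\in\QP$ and then checking that the terms $\phi$, $\phi_1$ and all rank-one $\xi_\nu$ contributions cancel, so that only $\tfrac{\alpha}{2}[\phi,A]$ and the $\xi$-directional term survive. Any sign slip there corrupts the clean identity for $\tfrac{\alpha}{2}(S\phi-\phi S)$ and hence the final eigenbasis argument. The two preliminary cancellation checks---quaternionic terms dropping out of both $S\phi-\phi S$ and of \eqref{eq: 1.4}---are routine but essential, since the entire proof rests on the surviving pieces being precisely $[\phi,A^2]-h[\phi,A]$ and $\tfrac{\alpha}{2}[\phi,A]$.
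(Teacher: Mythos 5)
Your proposal is correct, and through the key identity it follows the paper's own computation almost step for step: your collapsed Ricci operator and the relation $S\phi-\phi S=[\phi,A^{2}]-h[\phi,A]$ are exactly the paper's \eqref{eq: 1.16}--\eqref{eq: 1.17}; the vanishing of the quaternionic terms in \eqref{eq: 1.4} when $\xi=\xi_{1}$, the Codazzi expression, and its reduction via the Hopf equation \eqref{eq: 1.14} to $(\nabla_{\xi}A)X=\tfrac{\alpha}{2}(\phi AX-A\phi X)+(\xi\alpha)\eta(X)\xi$ reproduce \eqref{eq: 1.13}--\eqref{eq: 1.15}, and you arrive at the same pivotal equation \eqref{eq: 1.18}. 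The genuine divergence is the endgame. The paper substitutes $X\mapsto AX$ in \eqref{eq: 1.18}, separately applies $A$, subtracts, and uses Lemma~\ref{lemma 1.2} to obtain $(\phi S-S\phi)A=A(\phi S-S\phi)$ as in \eqref{eq: 1.19}--\eqref{eq: 1.21}, then invokes the external equivalence of \cite[Lemma~1.5]{PaSW} to pass from this commutation relation to $S\phi=\phi S$. You instead read \eqref{eq: 1.18} pointwise in a common orthonormal eigenbasis $\{e_{i}\}$ of the commuting symmetric operators $S$ and $A$ (Lemma~\ref{lemma 1.2}) chosen to contain $\xi$: the right-hand side is then diagonal, while every diagonal entry of the left-hand side vanishes, since $g(S\phi e_{i},e_{i})=g(\phi e_{i},Se_{i})=\sigma_{i}\,g(\phi e_{i},e_{i})=0$ and likewise $g(\phi Se_{i},e_{i})=0$ by skew-symmetry of $\phi$; hence both sides vanish identically and $\alpha\neq 0$ yields $S\phi=\phi S$. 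Your finish is valid and buys self-containedness---it needs neither the $AX$-substitution trick nor the citation of \cite{PaSW}---but you should make explicit the one fact it uses silently: to include $\xi$ in the eigenbasis (so that $\eta\otimes\xi$ is diagonal) you need $\xi$ to be an eigenvector of $S$ as well as of $A$, which is immediate from your own collapsed formula, $S\xi=(4m+h\alpha-\alpha^{2})\xi$, because $\phi\xi=0$. What the paper's route buys in exchange is reusability: the same substitution-plus-\cite[Lemma~1.5]{PaSW} step is deployed again verbatim in the generalized Tanaka--Webster setting of Lemma~\ref{lemma 3.2}, where your eigenbasis argument would have to be repeated from scratch.
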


\vskip 3pt

\begin{proof}
From the Codazzi equation in \cite{BS1} and by differentiating $A\x=\alpha\x$, we obtain
\begin{equation}\label{eq: 1.13}
\begin{split}
(\N_\x A)X = (X\alpha)\x+\al \p AX-A\p AX +\p X+\po X+2\eth(X)\xtw-2\etw(X)\xth.
\end{split}
\end{equation}

\cite[Lemma~$A$, $(3.3)$]{LCW} is essential equation for proving this lemma:
\begin{equation}\label{eq: 1.14}
\begin{split}
\alpha   A\phi X + \alpha \phi A  X -2 A\phi AX  & + 2 \phi X =2\displaystyle \sum_{\nu=1}^3 \Big\{-\eta_\nu(X) \phi \xi_{\nu} - \eta_\nu(\phi X) \xi_{\nu}  \\
& - \eta_\nu(\xi) \phi_{\nu}X + 2\eta(X) \eta_\nu(\xi) \phi \xi_{\nu} + 2 \eta_\nu(\phi X)\eta_\nu(\xi) \xi \Big\}.
\end{split}
\end{equation}

\noindent Using \eqref{eq: 1.13} and \eqref{eq: 1.14}, we get $(\N_\x A)X=\frac{\al}{2}\p AX-\frac{\al}{2}A\p X+ (\x\al)\eta (X)\x$,
which changes \eqref{C-2} into
\begin{equation}\label{eq: 1.15}
\begin{split}
&(\x h)AX + \frac{h\al}{2}(\p AX- A\p X)-\frac{\al}{2}(\p A^2X- A^2\p X) +(h-2\al)(\x\al)\e(X)\x\\
&=\omega(\x)SX.
\end{split}
\end{equation}

Here replacing $X$ by $\p X$ in \eqref{eq: 1.1} \big(resp., applying $\p$ to \eqref{eq: 1.1}\big), we have

\begin{equation}\label{eq: 1.16}
\left \{
\begin{aligned}
S\p X=(4m+7)\p X-\po X+2\etw(X)\xth-2\eh(X)\xtw+hA\p X-A^2\p X, \\
\p SX=(4m+7)\p X-\po X+2\etw(X)\xth-2\eh(X)\xtw+h\p AX-\p A^2X.
\end{aligned}
\right.
\end{equation}

Combining equations in \eqref{eq: 1.16}, we obtain
\begin{equation}\label{eq: 1.17}
S\p X - \p SX = h A\p X - A^{2} \p X - h \p AX + \p A^{2}X.
\end{equation}
Using \eqref{eq: 1.17}, \eqref{eq: 1.15} becomes

\begin{equation}\label{eq: 1.18}
\begin{split}
&(\x h)AX +\frac{\al}{2}(\p SX-S\p X) +(h-2\al)(\x\al)\e(X)\x=\omega(\x)SX.
\end{split}
\end{equation}

Substituting $X$ to $A X$ into \eqref{eq: 1.18} and
applying $A$ to \eqref{eq: 1.18},
we have
\begin{equation}\label{eq: 1.19}
\left \{
\begin{aligned}
&(\x h)A^{2}X +\frac{\al}{2}(\p S-S\p)AX +\al(h-2\al)(\x\al)\e(X)\x=\omega(\x)SAX, \\
&(\x h)A^{2}X +\frac{\al}{2}A(\p S-S\p)X +\al(h-2\al)(\x\al)\e(X)\x=\omega(\x)ASX.
\end{aligned}
\right.
\end{equation}

By combining equations in \eqref{eq: 1.19} and using Lemma~\ref{lemma 1.2}, we get
\begin{equation}\label{eq: 1.21}
(\p S-S\p)A=A(\p S-S\p).
\end{equation}

%
If the Reeb vector field $\x$ belongs to $\QP$ and $A\x=\al\x$ on $M$, $A(\p S-S\p)=(\p S-S\p)A$ is equivalent to $S\p=\p S$ on $M$ (see \cite[Lemma 1.5]{PaSW}).

\end{proof}
\vskip 5pt
Summing up above lemmas~\ref{lemma 1.2},~\ref{lemma 1.3}, \cite[Theorem 1.1]{S02}, \cite[Theorem]{BS2} and \cite[Theorem]{BS1}, we conclude that if $M$ is a Hopf hypersurface in $\GBt$ on which holds \eqref{C-2}, then $M$ satisfies the condition of being a model space of Type~$(A)$ (shortly, $M_{A}$).
\vskip 5pt
From this together with Theorem~$A$ in the introduction we know that any
real hypersurface in $\GBt$ with Reeb recurrent Ricci tensor and
$\x\in \QP$ is congruent to a tube over a totally geodesic $\GBo$ in $\GBt$.
Now let us check if real hypersurfaces $M_{A}$ satisfy the condition of Reeb recurrent Ricci tensor.

By virtue of \cite{S01}, we have
\begin{remark}\label{remark 1.5}
\rm If $\omega(\x)=0$, the Ricci tensor $S$ of real hypersurfaces $M_{A}$ in $\GBt$ satisfies the {\it Reeb parallel} condition.
\end{remark}
So we may consider only $\omega(\x)\neq0$. We assume that $M_{A}$ satisfies \eqref{C-2}.\\
By the equation of Codazzi \cite{BS1} and \cite[Proposition~$\rm 3$]{BS1} we obtain $X \in T_{x}M_{A}=T_{\al}\oplus T_{\beta}\oplus T_{\lambda}\oplus T_{\mu}$
\begin{equation}\label{eq: 1.24}
(\N_{\x} S)X  = - h(\N_{\x}A)X + (\N_{\x}A)AX + A(\N_{\x}A)X
\end{equation}
and
\begin{equation}\label{eq: 1.25}
\begin{split}
(\N_{\x}A)X & =\frac{\al}{2}\p AX-\frac{\al}{2}A\p X+(\x\al)\e(X)\x\\
            & = \left\{ \begin{array}{ll}
                0                       & \mbox{if}\ \  X \in T_{\al},\\
                0                       & \mbox{if}\ \  X \in T_{\beta}=\text{span}\{\x_{\ell}|\, \ell=2,3\},\\
                0                       & \mbox{if}\ \  X \in T_{\lambda},\\
                0      & \mbox{if}\ \  X \in T_{\mu}.\\
\end{array}\right.
\end{split}
\end{equation}
From these two equations, it follows that
\begin{equation}\label{eq: 1.26}
(\N_{\x}S)X  = \left\{ \begin{array}{ll}
                0                    & \mbox{if}\ \  X=\xi \in T_{\al},\\
                0                    & \mbox{if}\ \  X=\x_{\ell} \in T_{\beta}=\text{span}\{\x_{\ell}|\, \ell=1,2,3\}, \\
                0                    & \mbox{if}\ \  X \in T_{\lambda},\\
                0                    & \mbox{if}\ \  X \in T_{\mu}.\\
\end{array}\right.
\end{equation}
\vskip 5pt
Consider
\begin{equation}\label{eq: 1.27}
SX  = \left\{ \begin{array}{ll}
                (4m+h\al-\al^2)\x                    & \mbox{if}\ \  X=\xi \in T_{\al},\\
                (4m+6+h\be-\be^2)\x_{\ell}                   & \mbox{if}\ \  X=\x_{\ell} \in T_{\beta}=\text{Span}\{\x_{\ell}|\, \ell=1,2,3\}, \\
                (4m+6+h\lambda-\lambda^2)X                   & \mbox{if}\ \  X \in T_{\lambda},\\
                (4m+8)X                    & \mbox{if}\ \  X \in T_{\mu}.\\
\end{array}\right.
\end{equation}
If we consider a non-zero tangent vector field $ X \in T_{\mu}$, then we get $\omega(\x)(4m+8)X=0$, which means $\omega(\x)=0$. This is a contradiction.
\begin{remark}\label{remark 1.6}
\rm If $\omega(\x)\neq0$, the Ricci tensor $S$ of real hypersurfaces $M_{A}$ in $\GBt$ does not satisfy the {\it Reeb recurrent} condition.
\end{remark}
\noindent Summing up all cases mentioned above, we can assert that if $\omega(\x)=0$, then $S$ of real hypersurfaces $M_{A}$ in $\GBt$ satisfies the Reeb recurrent condition.
\vskip 5pt
For $\x\in\Q$, by \cite[Main Theorem]{LS}, we know $g(A\Q,\QP)=0$.
\vskip 5pt
We know that a Hopf hypersurface $M$ in $\GBt$ with Reeb recurrent Ricci tensor and $\x\in\Q$ is a real hypersurface of type~$(B)$ (denoted by $M_{B}$) in $\GBt$, that is,
a tube over a totally geodesic ${\mathbb H}P^{n}$. We will check if such a tube satisfies the notion of Reeb recurrent Ricci tensor.
We assume that $M_{B}$ satisfies \eqref{C-2}.\\
\vskip 5pt
In order to do this, let us calculate the fundamental equation related to the covariant derivative of $S$ of $M_{B}$ along the direction of $\x$. On $T_{x}M_{B}$, $x \in M_{B}$, since $\x \in \Q$ and $h=\text{Tr}(A)=\al+(4n-1)\beta$ is a constant, equation \eqref{C-2} is reduced to
\begin{equation}\label{eq: 1.28}
\begin{split}
(\N_{\x}S)X & =  -4\al \sum_{\nu =1}^3 \Big \{\eta_{\nu}(\p X)\x_{\nu} -  \EN(X)\pn\x \Big\} \\
                & \ \ - h(\N_{\x}A)X + (\N_{\x}A)AX + A(\N_{\x}A)X.
\end{split}
\end{equation}
Moreover, by the equation of Codazzi \cite{BS1} and \cite[Proposition~$\rm 2$]{BS1}, we obtain that for any $X \in T_{x}M_{B}$ $=$
$T_{\al}\oplus T_{\beta}\oplus T_{\gamma}\oplus T_{\lambda}\oplus T_{\mu}$
\begin{equation}\label{eq: 1.29}
\begin{split}
(\N_{\x}A)X & =\al \p AX - A \p AX + \p X - \sum_{\nu=1}^{3}\big\{\e_{\nu}(X)\p_{\nu}\x + 3g(\p_{\nu}\x, X)\x_{\nu}\big\}\\
            & = \left\{ \begin{array}{ll}
                0                       & \mbox{if}\ \  X \in T_{\al}\\
                \al \beta \p \x_{\ell} & \mbox{if}\ \  X \in T_{\beta}=\text{Span}\{\x_{\ell}|\, \ell=1,2,3\}\\
                -4 \x_{\ell}           & \mbox{if}\ \  X \in T_{\gamma}=\text{Span}\{\p \x_{\ell}|\, \ell=1,2,3\}\\
                (\al \lambda + 2) \p X  & \mbox{if}\ \  X \in T_{\lambda}\\
                (\al \mu + 2) \p X      & \mbox{if}\ \  X \in T_{\mu}.\\
\end{array}\right.
\end{split}
\end{equation}
Combining \eqref{eq: 1.28} and \eqref{eq: 1.29}, it follows that
\begin{equation}\label{eq: 1.30}
(\N_{\x}S)X  = \left\{ \begin{array}{ll}
                0                                       & \mbox{if}\ \  X=\xi \in T_{\al}\\
                \al(4-h \beta + \beta^{2})\p \x_{\ell} & \mbox{if}\ \  X=\x_{\ell} \in T_{\beta} \\
                4(\al + h-\beta)\x_{\ell} & \mbox{if}\ \ X=\p \x_{\ell} \in T_{\gamma}\\
                (h-\beta)(- \al \lambda -2)\p X  & \mbox{if}\ \  X \in T_{\lambda}\\
                (h-\beta)(- \al \mu -2)\p X      & \mbox{if}\ \  X \in T_{\mu}.\\
\end{array}\right.
\end{equation}

From \eqref{eq: 1.1} and \cite[Proposition~$\rm 2$]{BS1}, we obtain the following
\begin{equation}\label{eq: 1.31}
SX = \left\{ \begin{array}{ll}
                (4m+4+h\al-\al^2)\x  & \mbox{if}\ \  X=\x \in T_{\al}\\
                (4m+4+h\be-\be^2)\x_{\ell}  & \mbox{if}\ \  X=\x_{\ell} \in T_{\beta} \\
                (4m+8)\p\x_{\ell}  & \mbox{if}\ \ X=\p \x_{\ell} \in T_{\gamma}\\
                (4m+7+h\la-\la^2) X  & \mbox{if}\ \  X \in T_{\lambda}\\
                (4m+7+h\mu-\mu^2) X      & \mbox{if}\ \  X \in T_{\mu}.\\
\end{array}\right.
\end{equation}

For the case $X=\x$ in \eqref{C-2}, we have $0=\omega(\x)(-8n+8)\x$ which means $\omega(\x)=0$.


For $X \in T_{\gamma}$ and $X \in T_{\mu}$, we have  $h=\be-\al$ and $h=\be$ must be hold. However, this derives $\al=0$ which gives a contradiction.

\begin{remark}\label{remark 1.7}
\rm The Ricci tensor $S$ of real hypersurfaces of Type~$(B)$ in $\GBt$ does not satisfy the {\it recurrent} condition~\eqref{C-2}.
\end{remark}
\vskip 5pt

Hence summing up these considerations, we give a complete proof of our Theorem~$1$ in the introduction.


\section{The recurrent Ricci tensor}\label{section 2}
\setcounter{equation}{0}
\renewcommand{\theequation}{2.\arabic{equation}}
\vspace{0.13in}

Let us assume that the Ricci tensor of a Hopf hypersurface $M$ in $\GBt$ is recurrent. It is given by
\begin{equation}\label{eq: 2.1}
(\N_{X}S)Y=\omega({X})SY
\end{equation}
In this section, we prove Cororally~$2$, given in the introduction. By virtue of lemma~\ref{lemma 1.1}, we know that if $M$ has recurrent Ricci tensor, then the Reeb vector field $\x$ belongs to either $\Q$ or $\QP$.

\vskip 3pt

Next let us consider the case, $\x \in \QP$. Accordingly, we may put $\x=\x_1$.
\vskip 3pt
\begin{lemma}\label{lemma 2.1}
Let $M$ be a Hopf hypersurface in $\GBt$, $m \geq 3$ with vanishing Reeb curvature, that is, $\al=0$. If the Reeb vector field $\x$ belongs to $\QP$
and $M$ has recurrent Ricci tensor, then the shape operator $A$ and the structure tensor field $\p$ commutes with each other i.e., $A\p=\p A$.
\end{lemma}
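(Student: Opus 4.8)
The plan is to exploit how sharply the Reeb direction is pinned down when $\al=0$. Since $\x\in\QP$ I would first normalise $\x=\xi_{1}$, so that the Hopf condition reads $A\x=0$. Substituting $X=\x$ into the Ricci formula \eqref{eq: 1.1}, using $\p\x=0$ (which kills the terms $\p_{\nu}\p\x$ and $\e_{\nu}(\p\x)\p_{\nu}\x$) together with the identity $\x=\sum_{\nu}\e_{\nu}(\x)\x_{\nu}$ valid on $\QP$, every quaternionic term collapses, and the two shape-operator terms drop out because $A\x=0$. What survives is the single eigenvalue relation $S\x=4m\x$, so $\x$ is a Ricci-principal direction with eigenvalue $4m$ at every point of $M$.

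Next I would test the full recurrence hypothesis \eqref{eq: 2.1} on the pair $(X,\x)$ for arbitrary $X$. Expanding $(\N_{X}S)\x=\N_{X}(S\x)-S(\N_{X}\x)$ with $S\x=4m\x$ and the standard formula $\N_{X}\x=\p AX$ gives $(\N_{X}S)\x=(4m\,\mathrm{Id}-S)\p AX$, while the right-hand side equals $\omega(X)S\x=4m\,\omega(X)\x$. Since $g(\p AX,\x)=0$ and $g(S\p AX,\x)=g(\p AX,S\x)=4m\,g(\p AX,\x)=0$ by self-adjointness of $S$, the left-hand side is orthogonal to $\x$. Comparing the $\x$-components therefore forces $\omega(X)=0$ for every $X$, so $\omega\equiv0$ and the recurrence degenerates to $\N S=0$; the component orthogonal to $\x$ simultaneously yields $S\p AX=4m\,\p AX$ for all $X$.

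With these two facts I would bring in Lemma~\ref{lemma 1.2}, which gives $SA=AS$ because $\x\in\QP$. Thus $A$ and $S$ are simultaneously diagonalisable, every eigenspace of $S$ is $A$-invariant, and the relation $S\p AX=4m\,\p AX$ says that $\p(\operatorname{Im}A)$ lies in the $4m$-eigenspace $E_{4m}(S)$, which already contains $\x$. For an eigenvector $v$ of $A$ with eigenvalue $\rho\neq0$ this means $\p v\in E_{4m}(S)$; replacing $X$ by $AX$ in the pair of identities \eqref{eq: 1.16} and subtracting through \eqref{eq: 1.17}, I would then invoke the almost contact $3$-structure identities $\p\x_{\nu}=\p_{\nu}\x$, $\e_{2}(\p X)=\e_{3}(X)$ and $\e_{3}(\p X)=-\e_{2}(X)$ (which hold here precisely because $\x=\xi_{1}$) to cancel all the cross terms carrying $\p_{\nu}$ and $\x_{\nu}$. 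What remains is a relation purely in $A$ and $\p$; on the part of $TM$ where $A$ is invertible it forces $\p$ to preserve each $A$-eigenspace, i.e. $A\p=\p A$ there.

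The step I expect to be the main obstacle is exactly this last cancellation, together with the treatment of $\ker A$. The Ricci operator \eqref{eq: 1.1} carries the quaternionic corrections $\e_{\nu}(\x)\p_{\nu}\p X-\e_{\nu}(\p X)\p_{\nu}\x$, and one must verify that, after the substitution $X\mapsto AX$ and the use of the $3$-structure identities, these genuinely cancel the commutator $A\p-\p A$ rather than obstruct it; this forces a careful split of $T_{x}M=\Q\oplus\QP$ and a separate analysis of the directions annihilated by $A$. It is the vanishing of $\al$ that makes the $\QP$-contributions degenerate enough for the cancellation to close, which is why the argument is particular to the case $\al=0$.
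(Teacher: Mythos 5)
Your first two paragraphs are correct and in substance coincide with the paper's own opening move: the paper also puts $Y=\x$ into the recurrence (obtaining \eqref{eq: 2.4}) and compares $\x$-components to get $\omega(X)=0$ for all $X$, and your identity $S\p AX=4m\,\p AX$ is exactly the content of the resulting equation \eqref{eq: 2.5} rewritten through the Ricci formula \eqref{eq: 1.1}. The proof breaks down, however, at the decisive step, and you have in fact flagged the breakdown yourself. Everything you propose to use afterwards --- Lemma~\ref{lemma 1.2} ($SA=AS$, which the paper does not even need here), the inclusion $\p(\operatorname{Im}A)\subseteq E_{4m}(S)$, and substitutions of $AX$ into \eqref{eq: 1.16} and \eqref{eq: 1.17} --- consists of algebraic consequences of \eqref{eq: 1.1} and the recurrence alone, and such data only records \emph{which $S$-eigenspace} the vectors $\p Av$ lie in. Since $E_{4m}(S)$ is merely $A$-invariant and may carry several distinct $A$-eigenvalues, knowing $\p v\in E_{4m}(S)$ does not determine the $A$-eigenvalue of $\p v$, so no amount of this bookkeeping can force $\p$ to preserve the individual $A$-eigenspaces; moreover, for $X\in\ker A$ your relation degenerates to $0=0$ and gives no control of $A\p X$, a case you explicitly leave open.

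The missing ingredient is the Codazzi-derived identity \eqref{eq: 1.14} (Lemma~A of \cite{LCW}), the one place where the ambient geometry of $\GBt$ enters beyond the Ricci formula, and it is precisely what your outline never invokes. With $\al=0$ and $\x=\xo$ it becomes $A\p AX=\p X+\po X-2\etw(X)\xth+2\eh(X)\xtw$, the paper's \eqref{eq: 2.6}; this eliminates the quadratic expressions in $A$: applying $A$ and simplifying yields $A^{2}\p AX=2A\p X$ \eqref{eq: 2.7}, which substituted into \eqref{eq: 2.5} gives $-6\p AX+hA\p AX+2A\p X=0$ \eqref{eq: 2.8}. The transposed relation $6A\p X-hA\p AX-2\p AX=0$ \eqref{eq: 2.9} (using that $A\p A$ is skew-symmetric) added to \eqref{eq: 2.8} cancels the $h$-terms and gives $8(A\p-\p A)X=0$ for \emph{every} $X$ --- in particular no invertibility of $A$ is assumed, and $X\in\ker A$ immediately yields $A\p X=0$ there. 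So your plan establishes the easy half ($\omega\equiv 0$ and the eigenvalue relation) by essentially the paper's route, but without \eqref{eq: 1.14} the concluding commutation $A\p=\p A$ is not reached, and the proposed cancellation scheme on the invertible part of $A$ cannot substitute for it.
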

\vskip 3pt
\begin{proof}
\vskip 3pt
Putting $Y=\x$ into equation~\eqref{eq: 1.3} and
using \eqref{eq: 1.6}, we have
\begin{equation}\label{eq: 2.4}
-6\p AX+h A\p AX+A^2\p AX=4m\omega(X)\x.
\end{equation}

Taking the inner product of \eqref{eq: 2.4} with $\xi$, we have $\omega(X)=0$.

Thus, \eqref{eq: 2.4} becomes

\begin{equation}\label{eq: 2.5}
-6\p AX+h A\p AX+A^2\p AX=0.
\end{equation}

Given that $\x=\xo$, \eqref{eq: 1.14} becomes

\begin{equation}\label{eq: 2.6}
\begin{split}
A\p AX=\p X+\po X-2\etw(X)\xth+2\eh(X)\xtw.
\end{split}
\end{equation}

Applying $A$ to \eqref{eq: 2.6}, and using \eqref{eq: 1.17}, we have

\begin{equation}\label{eq: 2.7}
A^2\p AX=2A\p X.
\end{equation}

Thus, we have
\begin{equation}\label{eq: 2.8}
-6\p AX+h A\p AX+2A\p X=0.
\end{equation}

Taking the symmetric part of \eqref{eq: 2.8}, we have
\begin{equation}\label{eq: 2.9}
6A\p X-h A\p AX-2\p AX=0.
\end{equation}

Combining \eqref{eq: 2.8} and \eqref{eq: 2.9}, we have
$A\p=\p A$.

\end{proof}

Summing up lemmas~\ref{lemma 1.2},~\ref{lemma 1.3},~\ref{lemma 2.1}, \cite[Theorem]{BS2} and \cite[Theorem~$2$]{BS1}, we know that any connected Hopf
hypersurface in $\GBt$ with recurrent Ricci tensor is locally congruent to a real hypersurface $M_{A}$ if the Reeb vector field $\x$ belongs to the distribution
$\QP$. Now we check the converse problem: whether a Hopf hypersurface $M_{A}$ satisfies the given condition \eqref{eq: 2.1} or not. So we assmue that
 $M_{A}$ satisfies \eqref{eq: 2.1}.

Putting $Y=\x$ into \eqref{eq: 1.3}, we obtain
\begin{equation}\label{eq: 2.11}
\begin{split}
&-6\p AX+(h-\al)\al \p AX+hA\p AX+A^2\p AX=\omega(X)(4m+h\al-\al^2)\x.
\end{split}
\end{equation}
\vskip 5pt
Taking $X\in T_{\la}$, we have
\begin{equation}\label{eq: 2.12}
\begin{split}
&\la\big\{-6+(h-\al)\al+h\la+\la^2\big\}\p X=\omega(X)(4m+h\al-\al^2)\x,
\end{split}
\end{equation}
where we have used $\p T_{\la}\subset T_{\la}$ in Type~$A$.
\vskip 5pt
Thus $\la\big\{-6+(h-\al)\al+h\la+\la^2\big\}\p X$ and $\omega(X)(4m+h\al-\al^2)\x$ should vanish respectively.
Using $\la\neq0$ from \cite[Proposition~$3$]{BS1}, as $\p X$ cannot be vanishing, we have

\begin{equation}\label{eq: 2.13}
-6+\al(h-\al)+h\la+\la^2=0.
\end{equation}

Taking $X\in T_{\be}$, \eqref{eq: 2.11} becomes
\begin{equation}\label{eq: 2.14}
\begin{split}
&\be\big\{-6+(h-\al)\al\be+h\be+\be^2\big\}\p X=\omega(X)(4m+h\al-\al^2)\x
\end{split}
\end{equation}
where we have used $\p T_{\be}\subset T_{\be}$ in Type~$A$.
\vskip 5pt
Thus $\be\big\{-6+(h-\al)\al+h\be+\be^2\big\}\p X$ and $\omega(X)(4m+h\al-\al^2)\x$ should be vanishing respectively.
Using $\be\neq0$ from \cite[Proposition~$3$]{BS1}, as $\p X$ cannot be vanishing, we also have

\begin{equation}\label{eq: 2.15}
-6+\al(h-\al)+h\be+\be^2=0.
\end{equation}

Using $\be-\la\neq0$ and combining \eqref{eq: 2.13} and \eqref{eq: 2.15}, we have

\begin{equation}\label{eq: 2.16}
h+\la+\be=0.
\end{equation}

Combining \eqref{eq: 2.13} and \eqref{eq: 2.16}, and applying
\begin{equation*}
\alpha = \sqrt{8}\cot(\sqrt{8}r),\quad  \beta =
\sqrt{2}\cot(\sqrt{2}r),\quad \lambda =-\sqrt{2}\tan(\sqrt{2}r),\quad \mu
= 0
\end{equation*}
with some $r \in (0,\pi/\sqrt{8})$ (see \cite[Proposition~$2$]{BS1}).

We have
\begin{equation}\label{eq: 2.17}
0=-6+\al(h-\al)+h\la+\la^2=4+2\big\{\tan(\sqrt{2}r)-\cot(\sqrt{2}r)\big\}^2+\al^2>0.
\end{equation}
This gives a contradiction.

\begin{remark}\label{remark 2.1}
\rm The Ricci tensor $S$ of real hypersurfaces $M_{A}$ in $\GBt$ does not satisfy the {\it recurrent} condition.
\end{remark}

\vskip 5pt
For $\x\in\Q$, by \cite[Main Theorem]{LS}, we know $g(A\Q,\QP)=0$.
By virtue of Remark~\ref{remark 1.7}, Hopf hypersurface $M_{B}$ does not satisfy the given condition.
%
%

\section{The GTW Reeb recurrent Ricci tensor }\label{section 3}
\setcounter{equation}{0}
\renewcommand{\theequation}{3.\arabic{equation}}
\vspace{0.13in}

In this section, we prove our Theorem~$2$, given in the introduction. Related to Levi-Civita connection $\N$, the generalized Tanaka-Webster connection (from now on, GTW connection) for contact metric manifolds was introduced by Tanno \cite{Tan} as a generalization of the connection defined by Tanaka in \cite{Ta} and, independently, by Webster in \cite{W}. The Tanaka-Webster connection is defined as a canonical affine connection on a non-degenerate, pseudo-Hermitian CR-manifold. A real hypersurface $M$ in a K\"{a}hler manifold has an (integrable) CR-structure associated with the almost contact structure $(\phi ,\xi ,\eta ,g)$ induced on $M$ by the K\"{a}hler structure; however, in general, this CR-structure is not guaranteed to be pseudo-Hermitian. Cho defined GTW connection for a real hypersurface of a K\"{a}hler manifold by
\begin{equation*}
\gtw_{X}Y = \nabla_X Y+F_{X}^{(k)}Y,
\end{equation*}
where constant $k\in {\mathbb R}\setminus\{0\}$ and $F_{X}^{(k)}Y = g(\phi AX,Y)\xi -\eta(Y)\phi AX-k\eta(X)\phi Y$.
$F_{X}^{(k)}$ is a skew-symmetric (1,1) type tensor, that is, $g(F_{X}^{(k)}Y, Z)=-g(Y, F_{X}^{(k)}Z)$ for any tangent vector fields $X, Y$, and~$Z$ on $M$, and is said to be {\it Tanaka-Webster} (or {\it $k$-th-Cho}) {\it operator} with respect to $X$.
In particular, if the real hypersurface satisfies $A\phi +\phi A=2k\phi$, then the GTW connection $\hat {\nabla}^{(k)}$ coincides with the Tanaka-Webster connection (see \cite{CH1}, \cite{CH2}, \cite{JKLS}).
\vskip 5pt
The Ricci tensor $S$ is said to be {\it generalized Tanaka-Webster parallel} (in short, {\it GTW parallel}) if the covariant derivative in GTW connection $\gtw$ of $S$ along any $X$ vanishes, that is, if $(\gtw_{X}S)Y=0$.
\vskip 5pt
From the definition of $\gtw$ and $(\gtw_{X}S)Y$, we have
\begin{equation}\label{eq: 3.2}
\begin{split}
(\gtw_{X}S)Y&=(\N_{X}S)Y+F_{X}^{(k)}(SY)-SF_{X}^{(k)}Y\\
            &=\omega(X)SY.
\end{split}
\end{equation}


The condition \eqref{eq: 3.2} is specified as follow:

\begin{equation}\label{eq: 3.4}
\begin{split}
(\gtw_{X}S)Y & = (\N_{X}S)Y\\
                 &\quad + g(\phi AX,SY)\xi -\eta(SY)\phi AX-k\eta(X)\phi SY\\
                 &\quad - g(\phi AX,Y)S\xi +\eta(Y)S\phi AX+k\eta(X)S\phi Y\\
                 &=\omega(X)SY.
\end{split}
\end{equation}

The Ricci tensor $S$ is said to be {\it GTW Reeb parallel} if the covariant derivative in GTW connection $\gtw$ of $S$ along the Reeb direction vanishes, that is, if $(\gtw_{\x}S)Y=0$.
Furthermore, GTW Reeb recurrent Ricci tensor is given by
\begin{equation}\label{C-4}
\gtw_{\x}S=\omega(\x)S.
\tag{C-4}
\end{equation}

\begin{lemma}\label{lemma 3.1}
Let $M$ be a Hopf hypersurface in $\GBt$, $m \geq 3$. If $M$ has GTW Reeb recurrent Ricci tensor, then the Reeb vector field $\x$ belongs to either the distribution $\Q$ or the distribution $\QP$.
\end{lemma}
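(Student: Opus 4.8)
The plan is to imitate the proof of Lemma~\ref{lemma 1.1}, the only new ingredient being the extra Cho--Tanaka--Webster correction term. Writing $\xi=\eta(X_{0})X_{0}+\eta(\xi_{1})\xi_{1}$ as in \eqref{xi} for unit vectors $X_{0}\in\Q$ and $\xi_{1}\in\QP$, I would first simplify the defining relation \eqref{C-4}. Putting $X=\xi$ in the expansion \eqref{eq: 3.4} and using the Hopf condition $A\xi=\alpha\xi$ together with $\phi\xi=0$, every term containing $\phi A\xi=\alpha\phi\xi$ drops out, so \eqref{C-4} collapses to
\begin{equation*}
(\nabla_{\xi}S)Y+k\bigl(S\phi Y-\phi SY\bigr)=\omega(\xi)SY .
\end{equation*}
Thus the GTW condition is exactly the Reeb-recurrent condition \eqref{C-2} corrected by the commutator $k(S\phi-\phi S)$, and I would simply track how this commutator alters the computation of Lemma~\ref{lemma 1.1}.

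Next I would set $Y=\xi$. Here $S\phi\xi=0$, while by \eqref{eq: 1.1} and \eqref{xi} the vector $S\xi$ lies in $\mathrm{span}\{\xi,\xi_{1}\}$ with $\xi_{1}$-component equal to $-4\eta(\xi_{1})\xi_{1}$; applying $\phi$ (which annihilates the $\xi$-component) and using the identity $\phi\xi_{1}=\phi_{1}\xi$ gives $\phi S\xi=-4\eta(\xi_{1})\phi_{1}\xi$. Feeding this together with the left-hand side of \eqref{eq: 1.5} into the displayed relation, the only structural change compared with \eqref{eq: 1.5} is that the coefficient $-4\alpha\,\eta_{1}(\xi)$ of $\phi_{1}\xi$ is replaced by $4(k-\alpha)\,\eta(\xi_{1})$. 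Taking the inner product with $\phi_{1}\xi$, which is orthogonal to both $\xi$ and $\xi_{1}$, and using $|\phi_{1}\xi|^{2}=1-\eta_{1}(\xi)^{2}=\eta^{2}(X_{0})$, I obtain the analogue of \eqref{eq: 1.6}:
\begin{equation*}
4(k-\alpha)\,\eta(\xi_{1})\,\eta^{2}(X_{0})=0 .
\end{equation*}

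This splits into three cases. If $\eta(\xi_{1})=0$ then $\xi\in\Q$, and if $\eta(X_{0})=0$ then $\xi\in\QP$; these are immediate and give the desired dichotomy. The essential difficulty, and the place where this lemma genuinely differs from Lemma~\ref{lemma 1.1} (where the corresponding degenerate case was $\alpha=0$), is the remaining possibility $\alpha=k$. To dispose of it I would argue on the open set $U=\{\,\eta(\xi_{1})\neq0,\ \eta(X_{0})\neq0\,\}$: on $U$ the identity above forces $\alpha\equiv k$, so $\alpha$ equals the fixed constant $k$ there and hence $d\alpha=0$ on $U$. Substituting $\xi\alpha=0$ and $Y\alpha=0$ into the gradient formula $Y\alpha=(\xi\alpha)\eta(Y)-4\sum_{\nu=1}^{3}\eta_{\nu}(\xi)\eta_{\nu}(\phi Y)$ of \cite[Lemma~$1$]{BS1}, and noting that $\eta_{\nu}(\xi)=\eta(\xi_{1})\,\delta_{\nu1}$ by \eqref{xi}, yields $\eta(\xi_{1})\,\eta_{1}(\phi Y)=0$ for all $Y$, i.e. $\phi_{1}\xi=0$; but $|\phi_{1}\xi|^{2}=\eta^{2}(X_{0})\neq0$ on $U$, a contradiction. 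Hence $U=\emptyset$, so at every point either $\eta(\xi_{1})=0$ or $\eta(X_{0})=0$, which completes the proof. I expect the only delicate points to be the book-keeping of the commutator term $k(S\phi-\phi S)$ and the justification that $\alpha$ is locally constant on $U$, so that the Hopf gradient identity can be invoked.
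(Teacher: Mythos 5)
Your proposal is correct and follows essentially the same route as the paper's own proof: you substitute $X=Y=\xi$ into \eqref{C-4}, extract the $\phi_{1}\xi$-component to get a scalar identity (your pairing with $\phi_{1}\xi$ versus the paper's applying $\phi$ and pairing with $X_{0}$ is only cosmetic), and run the same three-case analysis, with the degenerate constant-$\alpha$ case eliminated via the gradient formula of \cite[Lemma~1]{BS1} --- your argument on the open set $U$ simply spells out what the paper compresses into a citation of \cite{PS}. The one discrepancy is the sign in the degenerate case: the paper's \eqref{eq: 3.8}--\eqref{eq: 3.9} give $-4(\alpha+k)\eta_{1}^{2}(\xi)\eta(X_{0})=0$, i.e.\ $\alpha=-k$, whereas your coefficient $4(k-\alpha)\eta(\xi_{1})\eta^{2}(X_{0})$ is what direct computation yields (at $Y=\xi$ the Cho correction is $-k\phi S\xi=+4k\eta_{1}(\xi)\phi_{1}\xi$, added to the Levi-Civita contribution $-4\alpha\eta_{1}(\xi)\phi_{1}\xi$ from \eqref{eq: 1.5}), so the paper appears to carry a sign slip there; since the degenerate case is disposed of by the same argument for either constant value of $\alpha$, the conclusion of the lemma is unaffected.
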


\begin{proof}
We write
\begin{equation}\label{xi}
\x = \eta(X_{0})X_{0}+\eta(\x_{1})\x_{1}
\tag{*}
\end{equation}
for some unit vectors $X_{0} \in \Q$ and $\x_{1} \in \QP$.

Putting $Y=\x$ into \eqref{C-4} and applying $\p$ to \eqref{C-4}, we have

\begin{equation}\label{eq: 3.8}
\begin{split}
-4(\al+k)\eo(\x)\big\{\xo+\e(\xo)\x\big\}=-4\omega(\x)\eo(\x)\p\xo.
\end{split}
\end{equation}

Taking an inner product with $X_{0}$, we have

\begin{equation}\label{eq: 3.9}
-4(\al+k)\eo^2(\x)\e(X_{0})=0.
\end{equation}
From this, we have the following three cases.

\vskip 3pt
\noindent {\bf Case 1\,:} \quad $\al = -k$.
\newline By the equation $Y{\alpha}=({\x}{\alpha}){\eta}(Y)-4{\SN}{\ENK}{\EN}({\p}Y)$
in \cite[Lemma~$1$]{BS1}, we obtain easily that $\x$ belongs to either $\Q$ or $\QP$ (see \cite{PS}).

\vskip 3pt
\noindent {\bf Case 2\,:} \quad $\eta(\x_{1})=0$.
\vskip 3pt

By the notation \eqref{xi} related to the Reeb vector field, we see that $\x$ belongs to the distribution $\Q$.

\vskip 3pt
\noindent {\bf Case 3\,:} \quad $\E(X_0)=0$.
\vskip 3pt

This case implies that $\x$ belongs to the distribution $\QP$.
\vskip 5pt
\noindent Accordingly, summing up these cases, it completes the proof of our Lemma.
\end{proof}
\vskip 3pt


As we know,
\begin{equation}\label{eq: 3.10}
\begin{split}
(\gtw_{\x}S)Y=(\N_{\x}S)Y+k(S\p-\p S)Y
\end{split}
\end{equation}

Next let us consider the case, $\x \in \QP$. Accordingly, we may put $\x=\x_1$.

\begin{lemma}\label{lemma 3.2}
Let $M$ be a Hopf hypersurface in $\GBt$, $m \geq 3$.
When the Reeb vector field $\x$ belongs to the distribution $\QP$, if $M$ has the GTW Reeb recurrent Ricci tensor, that is, $(\gtw_{\x}S)X=\omega(\x)SX$ $(\al \neq 2k)$, then $S\p=\p S$.
\end{lemma}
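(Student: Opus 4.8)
The goal is to prove Lemma~\ref{lemma 3.2}: when $\x \in \QP$ (so we may set $\x = \x_1$) and $M$ is Hopf with GTW Reeb recurrent Ricci tensor satisfying $\al \neq 2k$, then the Ricci tensor commutes with the structure tensor, $S\p = \p S$. The plan is to exploit the decomposition \eqref{eq: 3.10}, namely $(\gtw_{\x}S)Y = (\N_{\x}S)Y + k(S\p - \p S)Y$, to rewrite the GTW recurrence condition \eqref{C-4} purely in terms of objects already computed in Section~1. Since the hypotheses of Lemma~\ref{lemma 1.3} are met when $\x \in \QP$, I expect the Levi-Civita piece $(\N_{\x}S)Y$ to be controllable through the same machinery: the simplified form of $(\N_{\x}A)X = \frac{\al}{2}\p AX - \frac{\al}{2}A\p X + (\x\al)\e(X)\x$ obtained from \eqref{eq: 1.13} and \eqref{eq: 1.14}, together with the commutator identity \eqref{eq: 1.17}, $S\p X - \p SX = hA\p X - A^2\p X - h\p AX + \p A^2 X$.

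First I would substitute \eqref{eq: 3.10} into \eqref{C-4} to obtain $(\N_{\x}S)Y + k(S\p - \p S)Y = \omega(\x)SY$. Then, reusing the computation leading to \eqref{eq: 1.18}, the Levi-Civita term becomes $(\N_{\x}S)X = (\x h)AX + \frac{\al}{2}(\p S - S\p)X + (h-2\al)(\x\al)\e(X)\x$. Combining these yields an equation of the shape
\begin{equation*}
(\x h)AX + \Bigl(\tfrac{\al}{2} - k\Bigr)(\p S - S\p)X + (h-2\al)(\x\al)\e(X)\x = \omega(\x)SX.
\end{equation*}
The coefficient $\tfrac{\al}{2} - k = \tfrac{1}{2}(\al - 2k)$ is exactly where the hypothesis $\al \neq 2k$ enters and keeps the commutator term from degenerating.

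Next, mirroring the derivation of \eqref{eq: 1.19}, I would substitute $X \mapsto AX$ in the displayed equation and separately apply $A$ to it, producing two equations whose left-hand sides share the terms $(\x h)A^2 X$ and $(h-2\al)(\x\al)\,\al\,\e(X)\x$ (using $A\x = \al\x$), while the right-hand sides are $\omega(\x)SAX$ and $\omega(\x)ASX$ respectively. Subtracting, and invoking Lemma~\ref{lemma 1.2} (which applies since $\x \in \QP$) to cancel the $\omega(\x)(SA - AS)$ term, leaves an identity of the form $\bigl(\tfrac{\al}{2}-k\bigr)\bigl[(\p S - S\p)A - A(\p S - S\p)\bigr] = 0$. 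Because $\al \neq 2k$, the scalar factor is nonzero and can be divided out, giving $(\p S - S\p)A = A(\p S - S\p)$, the analogue of \eqref{eq: 1.21}. Finally, since $\x \in \QP$ and $A\x = \al\x$, the cited result \cite[Lemma~1.5]{PaSW} upgrades this commutation of $\p S - S\p$ with $A$ to the desired conclusion $S\p = \p S$.

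The main obstacle I anticipate is verifying that the Levi-Civita covariant derivative $(\N_{\x}S)Y$ indeed reduces to the same clean expression \eqref{eq: 1.18} under the present hypotheses, and that the GTW correction $k(S\p - \p S)Y$ merges cleanly with the $\frac{\al}{2}(\p S - S\p)$ term rather than introducing extraneous $\x$-direction or $\EN$-type contributions. The structural parallel with Lemma~\ref{lemma 1.3} strongly suggests this works, but one must confirm that the skew-symmetry of $F^{(k)}_X$ and the specific Hopf reductions do not spawn additional terms; once that consolidation is checked, the remaining algebra is the routine $X\mapsto AX$ versus $A\cdot$ comparison already rehearsed in \eqref{eq: 1.19}--\eqref{eq: 1.21}.
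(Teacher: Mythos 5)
Your proposal is correct and follows essentially the same route as the paper: you combine $(\gtw_{\x}S)Y=(\N_{\x}S)Y+k(S\p-\p S)Y$ with the reduction \eqref{eq: 1.18} to arrive at exactly the paper's equation \eqref{eq: 3.11}, then perform the $X\mapsto AX$ versus $A\cdot$ comparison, cancel via Lemma~\ref{lemma 1.2}, divide by $\tfrac{\al}{2}-k\neq 0$, and finish with \cite[Lemma 1.5]{PaSW}. Your explicit identification of $\tfrac{\al}{2}-k$ as the role of the hypothesis $\al\neq 2k$, and your explicit citation of Lemma~\ref{lemma 1.2} in the cancellation step, merely make visible what the paper leaves implicit in ``combining them.''
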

\vskip 3pt

\begin{proof}
Using \eqref{eq: 1.16} and \eqref{eq: 1.18}, then \eqref{eq: 3.2} becomes

\begin{equation}\label{eq: 3.11}
\begin{split}
&(\x h)AX +(\frac{\al}{2}-k)(\p SX-S\p X) +(h-2\al)(\x\al)\e(X)\x=\omega(\x)SX.
\end{split}
\end{equation}

Substituting $X$ to $A X$ into \eqref{eq: 3.11} and
applying $A$ to \eqref{eq: 3.11} and combining them, we have $(\p S-S\p)A=A(\p S-S\p)$.
By \cite[Lemma 1.5]{PaSW}, we have $S\p=\p S$.

\end{proof}

Summing up these discussions, we conclude that if a Hopf hypersurface $M$ in complex two-plane Grassmannians $\GBt$, $m \geq 3$, satisfying $(\gtw_{X}S)Y=\omega(X)SY$ then $M$ is of Type~$(A)$.
%
Hereafter, let us check whether $S$ of a model space of $M_{A}$ satisfies the Reeb parallelism with respect to $\gtw$ by
\cite[Proposition~$\rm 3$]{BS1} (see \cite{LCW}).
From these two equations, it follows that
\begin{equation}\label{eq: 3.13}
(\gtw_{\x}S)X  = \left\{ \begin{array}{ll}
                0                    & \mbox{if}\ \  X=\xi \in T_{\al}\\
                0                    & \mbox{if}\ \  X=\x_{\ell} \in T_{\beta}=\text{Span}\{\x_{\ell}|\, \ell=1,2,3\} \\
                0                    & \mbox{if}\ \  X \in T_{\lambda}\\
                0                    & \mbox{if}\ \  X \in T_{\mu}.\\
\end{array}\right.
\end{equation}

\vskip 5pt

Consider \eqref{eq: 1.28} and $X=\xi \in T_{\al}$; thus, $S\x=(4m+h\al-\al^2)\x$. Thus, $\omega(\xi)=0$.

\noindent Summing up all cases mentioned above, we can assert that if $\omega(\xi)=0$, then $S$ of $M_{A}$ in $\GBt$ is GTW Reeb parallel.

\begin{remark}\label{remark 3.1}
\rm The Ricci tensor $S$ of real hypersurfaces $M_{A}$ in $\GBt$ satisfies the {\it GTW Reeb parallel} condition if $\omega(\xi)=0$.
\end{remark}
\vskip 3pt

For $\x\in\Q$, by \cite[Main Theorem]{LS}, we know $g(A\Q,\QP)=0$.
\vskip 3pt
Now let us consider our problem for a model space $M_{B}$. In order to do this, let us calculate the fundamental equation related to the covariant derivative of $S$ of $M_{B}$ along the direction of $\x$ in GTW connection. On $T_{x}M_{B}$, $x \in M_{B}$, since $\x \in \Q$ and $h=\text{Tr}(A)=\al+(4n-1)\beta$ is a constant, \eqref{C-4} is reduced to
\begin{equation*}\label{eq: 3.14}
\begin{split}
(\gtw_{\x}S)X & =  4(k-\al) \sum_{\nu =1}^3 \Big \{\eta_{\nu}(\p X)\x_{\nu} -  \EN(X)\pn\x \Big\} \\
                & \ \ - h(\nabla_{\x}A)X + (\nabla_{\x}A)AX + A(\nabla_{\x}A)X \\
                & \ \ + kh\Ph AX - k\Ph A^2 X - khA\Ph X + kA^2 \Ph X.
\end{split}
\end{equation*}
Moreover, by the equation of Codazzi \cite{BS1} and \cite[Proposition~$\rm 2$]{BS1} we obtain that for any $X \in T_{x}M_{B}$
\begin{equation}\label{eq: 3.15}
\begin{split}
(\N_{\x}A)X & =\al \p AX - A \p AX + \p X - \sum_{\nu=1}^{3}\big\{\e_{\nu}(X)\p_{\nu}\x + 3g(\p_{\nu}\x, X)\x_{\nu}\big\}\\
            & = \left\{ \begin{array}{ll}
                0                       & \mbox{if}\ \  X \in T_{\al}\\
                \al \beta \p \x_{\ell} & \mbox{if}\ \  X \in T_{\beta}=\text{Span}\{\x_{\ell}|\, \ell=1,2,3\}\\
                -4 \x_{\ell}           & \mbox{if}\ \  X \in T_{\gamma}=\text{Span}\{\p \x_{\ell}|\, \ell=1,2,3\}\\
                (\al \lambda + 2) \p X  & \mbox{if}\ \  X \in T_{\lambda}\\
                (\al \mu + 2) \p X      & \mbox{if}\ \  X \in T_{\mu}.\\
\end{array}\right.
\end{split}
\end{equation}
From these two equations, it follows that
\begin{equation}\label{eq: 3.16}
(\gtw_{\x}S)X  = \left\{ \begin{array}{ll}
                0                                       & \mbox{if}\ \  X=\xi \in T_{\al}\\
                (\al-k)(4-h \beta + \beta^{2})\p \x_{\ell} & \mbox{if}\ \  X=\x_{\ell} \in T_{\beta} \\
                \big\{4(\al -k) + (h-\beta)(4+k\beta)\big\}\x_{\ell} & \mbox{if}\ \ X=\p \x_{\ell} \in T_{\gamma}\\
                (h-\beta)(k \lambda - k \mu - \al \lambda -2)\p X  & \mbox{if}\ \  X \in T_{\lambda}\\
                (h-\beta)(k \mu - k \lambda - \al \mu -2)\p X      & \mbox{if}\ \  X \in T_{\mu}.\\
\end{array}\right.
\end{equation}
Therefore, we see that $M_{B}$ has Reeb parallel GTW-Ricci tensor, when $\al$ and $h$ satisfies the conditions $\al = k$ and $h-\beta=0$, which means $r=\frac{1}{2}\cot^{-1}(\frac{-k}{4(2n-1)})$. Moreover, this radius $r$ satisfies our condition $\al \neq 2k$.
%
Secondly, we check whether a model space $M_{B}$ satisfies the condition of GTW Reeb recurrent Ricci tensor. In this case, \eqref{eq: 3.10} becomes

\begin{equation}\label{eq: 3.17}
\begin{split}
&-3\p AX-\sum_{\nu =1}^3g(AX,\XN)\PN \x+(h-\al)\al\p AX+hA\p AX+A^2\p AX\\
&=\omega(X)(4m+h\al-\al^2)\x.
\end{split}
\end{equation}

Taking the inner product with $\x$, we get $(4m+h\al-\al^2)\omega(X)=0$ which means

\begin{equation}\label{eq: 3.18}
\begin{split}
-3\p AX-\sum_{\nu =1}^3g(AX,\XN)\PN \x+(h-\al)\al\p AX+hA\p AX+A^2\p AX=0.
\end{split}
\end{equation}

\begin{remark}\label{remark 3.2}
\rm The Ricci tensor $S$ of any real hypersurface $M_{B}$ in $\GBt$ satisfies the {\it GTW Reeb parallel} condition.
\end{remark}

\vskip 3pt
Consider $X=\xo\in T_\beta$, we get
\begin{equation}\label{eq: 3.19}
\begin{split}
(-2+h\al-\al^2-h\be)\p \xo=0.
\end{split}
\end{equation}
\vskip 3pt
The coefficient of left term is less than 0, i.e., $-2+h\al-\al^2-h\be=-2-4(4n-2)-(4n-1)\be^2<0$. This means $\p \xo=0$ which makes a contradiction.

\begin{remark}\label{remark 3.3}
\rm The Ricci tensor $S$ of a real hypersurface $M_{B}$ in $\GBt$ does not satisfy the {\it Proper GTW Reeb recurrent} condition.
\end{remark}

\section{GTW recurrent Ricci tensor }\label{section 4}
\setcounter{equation}{0}
\renewcommand{\theequation}{4.\arabic{equation}}

\vspace{0.13in}
By virtue of \ref{lemma 3.1}, if $M$ has the GTW recurrent Ricci tensor~\eqref{eq: 3.4}
($\al\neq 2k$), then the Reeb vector field $\x$ belongs to either $\Q$ or $\QP$.
In addition, by virtue of lemma~\ref{lemma 3.2}, if $\x$ belongs to $\QP$, we have $S\p=\p S$.
Now we check the converse problem whether a real hypersurface $M_{A}$ satisfies the given condition \eqref{eq: 3.4} or not.

Putting $Y=\x$ into \eqref{eq: 3.4}, we get

\begin{equation}\label{eq: 4.1}
\begin{split}
(\N_{X}S)\x+F_{X}^{(k)}(S\x)-SF_{X}^{(k)}\x=\omega(X)S\x.
\end{split}
\end{equation}

Taking the inner product of \eqref{eq: 4.1} with $\x$, consider $(\N_{\x}S)\x=0$, $F_{X}^{(k)}$ is skew symmetric and $S\x=(4m+h\al-\al^2)\x$, we have
$(4m+h\al-\al^2)\omega(X)=0$, where $h=\al+2\be+(2m-2)(\lambda+\mu)$.

\begin{equation}
\begin{split}
4m+h\al-\al^2&=4m+2\al\be+(2m-2)\al\la\\
               &=4\{\cot^2(\theta)+(m-1)\tan^2(\theta)\}\\
               & \geq 8\sqrt{(m-1)}\\
               &>0.
\end{split}
\end{equation}

This gives
\begin{equation}\label{eq: 4.2}
\omega(X)=0.
\end{equation}
Putting $Y=\xi$ into \eqref{eq: 4.1}, we have

\begin{equation}\label{eq: 4.3}
-6\p AX+(h-\al)\al \p AX+hA\p AX+A^2\p AX-\sigma\p AX+S\p AX=0,
\end{equation}
where $\sigma = 4m+h\al-\al^2$.

Putting $X\in T_{\la}$ into \eqref{eq: 4.1}, we have $2h\la=0$ which means

\begin{equation}\label{eq: 4.4}
h=0.
\end{equation}

Consider $Y=\xth\in T_{\mu}$ into \eqref{eq: 4.2}, by \eqref{eq: 4.3} and \eqref{eq: 4.4}, we have

\begin{equation}\label{eq: 4.5}
\begin{split}
&-4\al \e(X)\x-3\po AX+\pth\p AX-\be(\N_X A)\xth\\
             &-\beta A(\N_X A)\xth+(6-\be^2+\al^2)\eh(AX)\x\\
             &=0.
\end{split}
\end{equation}

Taking the inner product with $\xtw$ of \eqref{eq: 4.5}, we have $3\be\eh(X)=0$. This means $3\be\xth=0$, and gives a contradiction.
Putting $Y\in T_{\mu}$ into \eqref{eq: 4.1}, we have

\begin{remark}\label{remark 4.1}
\rm The Ricci tensor $S$ of a real hypersurface $M_{A}$ in $\GBt$ does not satisfy the {\it GTW recurrent} condition.
\end{remark}

Now we check the converse problem, that is, a real hypersurface $M_{B}$ satisfies the given condition \eqref{eq: 3.4} or not.
Hereafter, let us check whether $M_{B}$ satisfies the condition of GTW recurrent Ricci tensor.

\eqref{eq: 1.3} becomes
\begin{equation}\label{eq: 4.6}
\begin{split}
&-3\p AX-\sum_{\nu =1}^3g(AX,\XN)\PN \x+(h-\al)\al\p AX+hA\p AX+A^2\p AX\\
&=\omega(X)(4m+h\al-\al^2)\x.
\end{split}
\end{equation}

Taking the inner product of \eqref{eq: 4.6} with $\x$, we get $\omega(X)=0$, which means

\begin{equation*}\label{eq: 4.7}
-3\p AX-\sum_{\nu =1}^3g(AX,\XN)\PN \x+(h-\al)\al\p AX+hA\p AX+A^2\p AX=0.
\end{equation*}
\vskip 3pt
Consider $X=\xo\in T_\beta$ into above equation, we get
\begin{equation}\label{eq: 4.8}
(-2+h\al-\al^2-h\be)\p \xo=0.
\end{equation}
\vskip 3pt

Since $-2+h\al-\al^2-h\be=-2-4(4n-2)-(4n-1)\be^2<0$, \eqref{eq: 4.8} means $\p \xo=0$. This is a contradiction.
\vskip 3pt
\begin{remark}\label{remark 4.2}
\rm The Ricci tensor $S$ of real hypersurfaces $M_{B}$ in $\GBt$ does not satisfy the {\it GTW recurrent} condition.
\end{remark}
Summing up these assertions, we give a complete proof of Cororally~$2$ in the introduction.


\end{document}